\def\NAT@def@citea{\def\@citea{\NAT@separator}}
\theoremstyle{plain}
\newtheorem{theorem}{Theorem}[section]
\newtheorem{lemma}{Lemma}[section]
\newtheorem{corollary}{Corollary}[section]
\newtheorem{definition}{Definition}[section]
\newtheorem{example}{Example}[section]
\theoremstyle{remark}
\newtheorem{remark}{Remark}
\newcommand{\argmin}{\operatornamewithlimits{argmin}}
\DeclareMathOperator{\con}{cone}
\DeclareMathOperator{\cl}{cl}
\DeclareMathOperator{\dom}{dom}
\DeclareMathOperator{\Int}{int}
\begin{document}

\title{Proper efficiency, scalarization and transformation in multi-objective optimization: Unified approaches}

\author{
\name{Moslem Zamani\textsuperscript{a,}\textsuperscript{b}\thanks{Email: zamani.moslem@tdt.edu.vn (Moslem Zamani)}and
 Majid Soleimani-damaneh\textsuperscript{c}\thanks{Email: soleimani@khayam.ut.ac.ir (Majid Soleimani-damaneh; Corresponding author)}}
\affil{\textsuperscript{a} Parametric MultiObjective Optimization Research Group, Ton Duc Thang University, Ho Chi Minh City, Vietnam;\\
\textsuperscript{b} Faculty of Mathematics and Statistics, Ton Duc Thang University, Ho Chi Minh City, Vietnam;\\
\textsuperscript{c} School of Mathematics, Statistics and Computer Science, College of Science, University of Tehran, Tehran, Iran; }
}

\maketitle

\begin{abstract}
In this paper, we investigate the relationships between proper efficiency and the solutions of a general scalarization problem in multi-objective optimization. We provide some conditions under which the solutions of the dealt with scalar program are properly efficient and vice versa. 
We also show that, under some conditions, if the considered general scalar problem is unbounded, then the original multi-objective problem does not have any properly efficient solution. In another part of the work, we investigate a general transformation of the objective functions which preserves proper efficiency. We show that several important results existing in the literature are direct consequences of the results of the present paper.

\end{abstract}

\begin{keywords}
Multi-objective optimization; Proper efficiency; Scalarization; Transformation
\end{keywords}

\section{Introduction}
Consider a general multi-objective optimization problem,
\begin{align}\label{MO}
\nonumber & \min \ f(x)\\
& \ s.t. \ x\in X,
\end{align}
where $f: X \to \mathbb{R}^p$ with $p\geq 2$ is the objective function. Multi-objective optimization problems  arise naturally in many applications in engineering, management, economics, finance, etc. Indeed, each decision making or optimization problem with more than two criteria or objectives can be cast as a multi-objective optimization problem.

Efficient solutions of Problem \eqref{MO} are defined as members of $X$ for which it is impossible to improve some objective(s) without deteriorating (at least) another one \cite{Eh, Mit}. Mathematically, $\bar x \in X$ is an efficient solution of Problem \eqref{MO} if there is no $x\in X$ with $f_i(x)\leq f_i(\bar x)$, $i=1, ..., p$ and $f_j(x)< f_j(\bar x)$ for some $j$. Proper efficiency is an important solution concept in multi-objective optimization which has been proposed in order to eliminate efficient solutions with unbounded trade-offs \cite{Geo, Mit}.

Scalarization is one of the most common approaches to handle multi-objective optimization problems \cite{Eh, Mit, Kass}. By scalarization methods, one solves a single-objective optimization problem, corresponding to \eqref{MO}, whose optimal solutions can be (weakly, properly) efficient for \eqref{MO}.
In addition, scalarization techniques are employed as a subproblem in iterative methods which generate an approximation of the efficient set \cite{Mar}, and also in interactive approaches which try to produce the most preferred solution \cite{Mit}.

An important question concerning scalarization problems is about the connection between their solutions and proper efficiency, as well as their ability to generate properly efficient solutions. Scalarization methods are not only strong tools to generate (properly) efficient solutions, but also provide valuable information about (the quality of) these solutions. In this study, we consider a general (unified) scalarization program, and provide some conditions under which the optimal solutions of the dealt with scalarization problem are properly efficient. We list some well-known scalarization techniques which satisfy the given sufficient conditions. Furthermore, we focus on parametric scalarization tools, and give sufficient conditions under which a parametric scalarization method is able to generate all properly efficient solutions.  We investigate the unbounded case separately, and establish that under some conditions the unboundedness of the considered general scalarization problem implies the emptiness of the set of properly efficient solutions.

Another part of the current study is devoted to investigation of a general transformation which maps objective functions preserving proper efficiency. Transformation of objective functions have been mainly proposed for normalization of objectives with different units  \cite{marler}. In addition, it has been exploited  to facilitate handling multi-objective problem, for instance by convexifying \cite{Li, Rome}. In this paper, we give sufficient conditions under which the set of properly efficient solutions of the original and transformed problems are the same. We show that several important results existing in the literature are direct consequences of the results of the present paper.

The rest of the paper is organized as follows. We review terminologies and notations in Section \ref{Sp}. Section \ref{Ss} is devoted to the scalarization methods. A unified transformation for multi-objective problems is studied in Section \ref{S.tr}. Section \ref{conc} contains a short conclusion.

\section{Terminologies and notations}\label{Sp}

The $p$-dimensional Euclidean space is denoted by $\mathbb{R}^p$. Vectors are considered to be column vectors and the superscript $T$ denotes the transpose operation. We denote the $i$-th component of a given vector $y$ by $y_i$. We use $e$ and $e^i$ to denote vector of ones and $i$-th unit coordinate vector, respectively.
The nonnegative orthant is denoted by $\mathbb{R}_{+}^p$.  For a set $Y\subseteq\mathbb{R}^p$, we use the notations $\Int(Y)$ and
$\con(Y)$ for the interior and the conic hull of  $Y$, respectively.

The notations $\leqq$, $\leq$ and  $<$ stand for the following orders on  $\mathbb{R}^p$ with $p\geq 2$,
\begin{align*}
& x\leqq y\Longleftrightarrow y-x\in\mathbb{R}^p_{+}, \\
& x \leq y\Longleftrightarrow y-x\in\mathbb{R}^p_{+}\setminus\{0\}, \\
& x<y\Longleftrightarrow y-x\in\Int(\mathbb{R}^p_{+}).
\end{align*}

Given a lower semi-continuous function $g: \mathbb{R}^p \to  \mathbb{R}\cup\{+\infty\}$ and $\bar y\in \dom g:=\{y : g(y)<+\infty\}$,  the regular subdifferential of $g$ at $\bar y$ is defined as
$$
 \hat\partial g(\bar y)=\{ \nu: \liminf_{\substack {y\to\bar y \\ y\neq \bar y}} \frac{g(y)-g(\bar y)- \langle \nu, y-\bar y\rangle}{\|y-\bar y\|}\geq 0\}.
$$
We remark that the regular subdifferential of a lower semi-continuous function $g$ at a given point $\bar y\in\dom g$ is a closed convex set. Furthermore, $\hat\partial g(\bar y)=\nabla g(\bar y)$ provided that $g$ is continuously differentiable at $\bar y$. We refer the reader to \cite{Mord} for a comprehensive study of  regular subdifferentials.

According to Rademacher's theorem, every locally Lipschitz function on $\mathbb{R}^n$ is almost everywhere differentiable in the sense of  Lebesgue measure \cite{Clar}. Let $\phi: \mathbb{R}^p \to\mathbb{R}^q$ be a locally Lipschitz function. The generalized Jacobian of $\phi$ at $\bar x$, denoted by $\partial \phi(\bar x)$, is defined by
$$
\partial \phi(\bar x):=co\{\lim_{\nu\to +\infty} \nabla \phi(x_\nu): x_\nu\to \bar x,~ x_\nu\notin  X_f\},
$$
where $X_f$ is the set of points at which $\phi$ is not differentiable, and $\nabla \phi(x_\nu)$ is the $q\times p$ Jacobian matrix of $\phi$ at $x_\nu$. If $\phi$ is continuously differentiable at $\bar y$, then $\partial \phi(\bar y)=\nabla g(\bar y)$.  See \cite{Clar} for more information on the generalized Jacobian.

The point $y^I\in\mathbb{R}^p$ in which $y_i^I = \min_{x\in X}f_i(x)$, $i = 1,2,...,p$, is called the ideal point of \eqref{MO}, and the point $y^U\in\mathbb{R}^p $ with $y^U<y^I$ is said a utopia point.

Several concepts for proper efficiency have been introduced in the literature. In what follows, we list some definitions which will be used in the sequel. For a comprehensive study of proper efficiency, the reader is referred to \cite{Gue}.

\begin{definition}\cite{Geo} \label{Geo}
A feasible solution $\bar x \in X$ is called a properly efficient solution of \eqref{MO} in the Geoffrion's sense, if it is efficient and there exist a real number $M > 0$ such that for all
$i  \in\{1,2,...,p\}$ and $x\in X$ with $f_i(x) < f_i(\bar x)$, there exists an index $j \in\{1,2,...,p\}$ with $f_j(x) > f_j(\bar x)$ and
$$
\frac{f_i (\bar x) - f_i(x) }{ f_j(x)- f_j(\bar x)}\leq M.
$$
\end{definition}

\begin{definition}\cite{BenP} \label{BenP}
A feasible solution $\bar x \in X$ is called a properly efficient solution of \eqref{MO} in the Benson's sense, if
$$
\cl\Big(\con\big(f(X)+\mathbb{R}^p_+-f(\bar x)\big)\Big)\cap (-\mathbb{R}^p_+)=\{0\}.
$$
\end{definition}

\begin{definition}\cite{Henig} \label{Henig}
A feasible solution $\bar x \in X$ is called a properly efficient solution of \eqref{MO} in the Henig's sense if  there exits a convex pointed cone $C$ with
 $R^p_+\setminus\{0\} \subseteq \Int(C)$ and
 $$
f(X) \cap   \big(f(\bar x) - C\big)= \{f(\bar x)\}.
 $$
\end{definition}

Definitions \ref{Geo}-\ref{Henig} for \eqref{MO} are equivalent \cite{Gue, Henig}. We remark that having different definitions for the proper efficiency turns out to be of value. In fact, a result can be easily derived from a given definition, while the proof of the same result with other definitions may be less obvious.

For $\delta\in\mathbb{R}_+$, set
$$
C_\delta=\{y\in\mathbb{R}^p: \langle e^i+\delta e, y\rangle\geqq 0, i=1, ..., p\}.
$$

\begin{remark}\label{p09}
For a convex cone $C$ with $R^p_+\setminus\{0\} \subseteq \Int(C)$, there exists some $\delta>0$ such that $R^p_+\subseteq C_\delta\subseteq C$; See \cite{Zam4}.
\end{remark}

\section{Scalarization and proper efficiency: A general umbrella}\label{Ss}

As mentioned earlier, a most common approach for tackling multi-objective optimization problems is scalarization. A general scalarization problem, associated with \eqref{MO}, is formulated as
\begin{align}\label{Sc}
\nonumber & \min \ g(f(x))\\
& \ s.t. \ x\in X,
\end{align}
where $g:Y\to \mathbb{R}\cup\{+\infty\}$ is a given function, in which $Y\subseteq \mathbb{R}^p$ satisfies $f(X)\subseteq Y$.


\begin{definition}\label{D1}
Let $Y\subseteq  \mathbb{R}^p$ be closed. We call a function $g:Y\to \mathbb{R}$ subdifferential-positive on $Y$ if it is lower semi-continuous on $Y$ and there exists some $\epsilon \in int(\mathbb{R}^p_+)$ such that
$$
y\in Y,\,\,\xi\in\hat\partial g(y)\Longrightarrow\xi\geqq \epsilon.
$$
In the above definition, we set $g(y)=+\infty$ for $y\notin Y$. The vector-valued function $\phi:Y\to \mathbb{R}^q$ is called subdifferential-positive on $Y$ if $\phi_i,~i=1,2,\ldots,q,$ is subdifferential-positive on $Y$.
\end{definition}

Now, we are ready to present the first result of the paper.
\begin{theorem}\label{T1}
Let $Y$ be a closed convex set with $f(X)\subseteq Y$, and let $g: Y\to \mathbb{R}$ be subdifferential-positive on $Y$. Then each optimal solution of \eqref{Sc} is a properly efficient solution for \eqref{MO}.
\end{theorem}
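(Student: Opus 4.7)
The plan is to verify Henig's characterization of proper efficiency (Definition~\ref{Henig}), and the heart of the argument is a strict monotonicity property of $g$ on $Y$ obtained via the approximate mean value theorem (AMV) for lower semi-continuous functions. I would first show that for $y_0,y_1\in Y$ with $y_0\leqq y_1$ and $y_0\ne y_1$,
\[
g(y_1)-g(y_0)\geq\langle\epsilon,y_1-y_0\rangle>0.
\]
Applying the AMV to the lsc extension of $g$ (set to $+\infty$ off $Y$) at the pair $(a,b)=(y_1,y_0)$, i.e.\ in the reversed order, produces, for any $\eta>0$, a point $x\in Y$ and $\xi\in\hat\partial g(x)$ with $\langle\xi,y_0-y_1\rangle\geq g(y_0)-g(y_1)-\eta$. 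Subdifferential-positivity forces $\xi\geqq\epsilon$; since $y_0-y_1\leqq 0$ componentwise, a coordinatewise estimate gives $\langle\xi,y_0-y_1\rangle\leq\langle\epsilon,y_0-y_1\rangle$. Combining the two inequalities and letting $\eta\downarrow 0$ yields the claim, the strict positivity coming from $\epsilon\in\Int(\mathbb{R}^p_+)$.

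Efficiency of $\bar x$ is then immediate: any $x\in X$ with $f(x)\leqq f(\bar x)$ and $f(x)\ne f(\bar x)$ would give $g(f(x))<g(f(\bar x))$ by strict monotonicity, contradicting optimality. For proper efficiency I would work with the cones $C_\delta$ of Remark~\ref{p09} and aim to show that $f(X)\cap(f(\bar x)-C_\delta)=\{f(\bar x)\}$ for some small $\delta>0$. Writing $d=f(x)-f(\bar x)\ne 0$ with $-d\in C_\delta$, the inequalities $\langle e^i+\delta e,-d\rangle\geqq 0$ force $s:=\sum_i d_i<0$ together with $d_i\leq-\delta s$ for every $i$. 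Applying the AMV along the segment $[f(\bar x),f(x)]$ and combining with the optimality $g(f(x))\geq g(f(\bar x))$ yields $\xi'\in\hat\partial g(x')$, $\xi'\geqq\epsilon$, with $\langle\xi',d\rangle$ arbitrarily close to nonnegative. Decomposing $d=d^+-d^-$ and using $d^+\leqq-\delta s\,e$ componentwise against $\langle\epsilon,d^-\rangle\geq(\min_i\epsilon_i)\,|s|$ (because $\|d^-\|_1\geq|s|$) then forces $\|\xi'\|_1$ to grow like $1/\delta$ as $\delta\downarrow 0$.

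The main obstacle is closing this final step rigorously. Because $g$ is only lower semi-continuous, its regular subgradients may blow up at upward jumps, and the lower bound on $\|\xi'\|_1$ above does not by itself produce a contradiction. I would handle this through a sequential argument: take $\delta_n\downarrow 0$ with witnesses $x_n\in X$, normalise $\hat d_n=d_n/\|d_n\|$ and extract a limit direction $\hat d\leqq 0$, $\hat d\ne 0$; combined with the strict monotonicity of the first step and the lower semi-continuity of $g$ at an appropriate limit of $f(x_n)$, this should contradict the optimality of $\bar x$ and so establish Henig's condition.
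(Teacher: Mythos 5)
Your first step is correct and useful as far as it goes: applying the approximate mean value theorem to ordered pairs $y_0\leqq y_1$ in $Y$ does give $g(y_1)-g(y_0)\geq\langle\epsilon,y_1-y_0\rangle>0$, and hence the efficiency of any optimal solution of \eqref{Sc}. But efficiency is not where the difficulty of Theorem \ref{T1} lies, and your argument for \emph{proper} efficiency has a genuine gap, which you yourself flag: after the estimate that $\|\xi'\|_1$ must be of order $1/\delta$ you have no contradiction, and the sequential repair you sketch does not close it. Concretely, the witnesses $f(x_n)$ need not have any cluster point, so ``an appropriate limit of $f(x_n)$'' may not exist; and even when $f(x_n)\to y^*$, your strict monotonicity is of no help, because $f(x_n)$ and $f(\bar x)$ are incomparable (that is the whole point), $y^*$ need not belong to $f(X)$ so optimality of $\bar x$ says nothing about $g(y^*)$, and lower semi-continuity only gives $g(y^*)\leq\liminf_n g(f(x_n))$, which points the same way as the monotonicity estimate $g(y^*)<g(f(\bar x))\leq g(f(x_n))$ and produces no contradiction. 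The hypothesis provides only a lower bound $\xi\geqq\epsilon$ on regular subgradients, so nothing in ``monotonicity plus lsc'' controls how steeply $g$ may increase along the (vanishingly small) positive components of $d_n$; that is exactly the blow-up phenomenon you identified, and your outline never neutralizes it.

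The paper's proof takes precisely the step you stopped short of. It negates Benson's definition (equivalent to your failing $C_{\delta_n}$ conditions), normalizes to obtain a direction sequence $u_\nu=(f(x_\nu)-f(\bar x))/\|f(x_\nu)-f(\bar x)\|\to-\bar d$ with $0\neq\bar d\geqq 0$ as in \eqref{T1.1}, applies the approximate mean value theorem along each whole segment $[f(\bar x),f(x_\nu)]$ (not only to ordered pairs), and divides the resulting inequality \eqref{o9087} by $\|f(x_\nu)-f(\bar x)\|$, so that optimality yields $\liminf_k\langle\xi^k_\nu,u_\nu\rangle\geq 0$ as in \eqref{T1.2}; the contradiction is then drawn from $\xi^k_\nu\geqq\epsilon$ together with $u_\nu\to-\bar d$. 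Your middle-paragraph computation is essentially this same mean-value inequality, but organized so that its conclusion is a lower bound on $\|\xi'\|_1$ rather than a statement about a fixed limiting direction, which is why it cannot conclude. To complete your route you should pass to the normalized directions and argue the incompatibility at the level of \eqref{T1.1}--\eqref{T1.2}, as the paper does (noting that the interaction between possibly large subgradient components and the slightly positive coordinates of $u_\nu$ is exactly the point one must still control there; the paper treats this incompatibility as immediate). As written, your proposal proves efficiency but not proper efficiency, so it does not establish the theorem.
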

\begin{proof}
We first extend the lower semi-continuous function $g$. It is seen that $\bar g: \mathbb{R}\to \mathbb{R}\cup\{+\infty\}$ defined by
$$\bar g(y)= \left\{\begin{array}{ll}
      g(y), & y\in Y \\
      +\infty, & y\notin Y
   \end{array}\right.
$$
is lower semi-continuous. The underlying reason for the extending $g$ is to use the mean value theorem. For convenience, let $g=\bar g$.
By indirect proof assume that $\bar x$ is an optimal solution of \eqref{Sc} while it is properly efficient. By Benson's proper efficiency definition, there are
$\{x_\nu\}_\nu\subseteq X$, $\{d_\nu\}_\nu\subseteq \mathbb{R}^p_+$ and $\{t_\nu\}_\nu\subseteq \mathbb{R}_+$ such that
$$
\lim_{\nu\to \infty} t_\nu(f(x_\nu)+d_\nu-f(\bar x))=-d,
$$
where $0\neq d\in\mathbb{R}^p_+$. So, without loss of generality, one may assume
\begin{align}\label{T1.1}
\lim_{\nu\to \infty} \frac{f(x_\nu)-f(\bar x)}{\|f(x_\nu)-f(\bar x)\|}= -\bar d,
\end{align}
for some $0\neq \bar d\in\mathbb{R}^p_+$.  By mean value Theorem 4.13 in \cite{Mord}, for each $\nu$, there are sequences $\{y^k_\nu\}_k\subseteq Y$ and $\{\xi^k_\nu\}_k$ with $y^k_\nu \to y_\nu$ as $k\to\infty$,
$\xi^k_\nu\in \hat\partial g(y^k_\nu)$, and
\begin{equation}\label{o9087}
\liminf_{k\to \infty} \langle \xi^k_\nu, f(x_\nu)-f(\bar x)\rangle\geq g(f(x_\nu))-g(f(\bar x)),
\end{equation}
where  $y_\nu\in [f(x_\nu), f(\bar x)]$; Here, $[y_1, y_2]$ stands for the line segment joining $y_1$ and $y_2$ in $\mathbb{R}^p$. By \eqref{o9087}, as $\bar x$ is an optimal solution of \eqref{Sc}, we have
\begin{align}\label{T1.2}
\liminf_{k\to \infty} \langle \xi^k_\nu, \frac{f(x_\nu)-f(\bar x)}{\|f(x_\nu)-f(\bar x)\|}\rangle\geq 0.
\end{align}
By \eqref{T1.1} and subdifferential-positive property of $g$ on $Y$, for $\nu$ sufficiently large, we should have
\begin{align*}
\liminf_{k\to \infty} \langle \xi^k_\nu, \frac{f(x_\nu)-f(\bar x)}{\|f(x_\nu)-f(\bar x)\|}\rangle< 0,
\end{align*}
which contradicts \eqref{T1.2} and completes the proof.
\end{proof}

Theorem \ref{T1} may not hold when  $\epsilon$ in Definition \ref{D1} is not strictly positive. The following example casts light on this point.

\begin{example}
Consider the multi-objective problem
\begin{align}\label{MO1}
\nonumber & \min \ \begin{bmatrix} -e^x\\  -e^{-x}
\end{bmatrix}\\
& \ s.t. \ x\in\mathbb{R}.
\end{align}
This problem does not admit any properly efficient solution. Let $g: -\mathbb{R}_+^2\to \mathbb{R}$ be given by $g(y)=-y_1y_2$. It is readily seen that $\nabla g(y)\geqq 0$ for each $y\in (-\mathbb{R}_+^2)$ and $f(\mathbb{R})\subseteq -\mathbb{R}_+^2$, but
$$
\argmin \{g(f(x)): x\in\mathbb{R}\}=\mathbb{R}.
$$
\end{example}

Corollary \ref{1+} below, addresses the result of Theorem \ref{T1} for differentiable case.

\begin{corollary}\label{1+}
Let $Y$ be a closed convex set with $f(X)\subseteq Y$, and $g: Y\to \mathbb{R}$ be continuously differentiable. Assume that there exists $\epsilon \in int(\mathbb{R}^p_+)$ such that
$\nabla g(y)\geqq \epsilon$ for each $y\in Y$. Then each optimal solution of \eqref{Sc} is a properly efficient solution for \eqref{MO}.
\end{corollary}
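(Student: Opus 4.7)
The plan is to derive Corollary \ref{1+} directly from Theorem \ref{T1} by checking that the hypotheses of that theorem are met. Concretely, I would argue that a continuously differentiable function $g:Y\to\mathbb{R}$ whose gradient satisfies $\nabla g(y)\geqq \epsilon$ on $Y$ is automatically subdifferential-positive on $Y$ in the sense of Definition \ref{D1}, and then invoke Theorem \ref{T1} verbatim.

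The two items to verify are: (i) $g$ is lower semi-continuous on $Y$, which is immediate because continuous differentiability implies continuity, and (ii) every element of $\hat\partial g(y)$ is bounded below componentwise by $\epsilon$. For (ii) I would recall the identity $\hat\partial g(y)=\{\nabla g(y)\}$ noted in Section \ref{Sp} for continuously differentiable functions; so if $\xi\in\hat\partial g(y)$, then $\xi=\nabla g(y)\geqq\epsilon$ by assumption. This exactly matches the defining implication
\[
y\in Y,\ \xi\in\hat\partial g(y)\ \Longrightarrow\ \xi\geqq\epsilon
\]
of Definition \ref{D1}, so $g$ is subdifferential-positive on $Y$ with the same vector $\epsilon\in\operatorname{int}(\mathbb{R}^p_+)$.

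Once these two observations are in place, Theorem \ref{T1} applies to the scalarization problem \eqref{Sc} with this $g$ and $Y$, since the remaining hypotheses (closedness and convexity of $Y$, and $f(X)\subseteq Y$) are carried over directly from the statement of the corollary. Consequently every optimal solution of \eqref{Sc} is a properly efficient solution of \eqref{MO}, which is exactly the conclusion we want.

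There is no real obstacle here: the proof is essentially a one-line reduction, and the only subtlety to be careful about is that the identification $\hat\partial g(y)=\{\nabla g(y)\}$ for continuously differentiable $g$ is what allows the uniform lower bound on gradients to translate into the subdifferential-positive property. I would therefore keep the proof short, just pointing to Definition \ref{D1} and Theorem \ref{T1}.
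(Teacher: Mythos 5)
Your proposal is correct and matches the paper's intended argument: the corollary is stated as the differentiable specialization of Theorem \ref{T1}, and the reduction you give --- lower semi-continuity from continuity plus $\hat\partial g(y)=\{\nabla g(y)\}$ so that $\nabla g(y)\geqq\epsilon$ yields the subdifferential-positive property of Definition \ref{D1} --- is exactly how it follows.
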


In the following, we show that several important existing results concerning scalarization and proper efficiency are directly derived from Theorem \ref{T1}. Indeed, the following results have been proved for each scalarization method in the literature separately; here we give a unified framework (we provide an umbrella for several important existing results) via Theorem \ref{T1}.

\begin{itemize}
\item{\textbf{Weighted Sum method} \cite{Zade}:} The scalar program of this method is formulated as
\begin{align}\label{S1}
\nonumber & \min \ \lambda^Tf(x)\\
& \ s.t. \ x\in X,
\end{align}
where $\lambda\in \Int(\mathbb{R}^p_+)$. Setting $g(y):=\lambda^Ty$, we have $\nabla g(y)=\lambda$ and so by Corollary \ref{1+} it is easily seen that the optimal solutions of \eqref{S1} are properly efficient for \eqref{MO}.

\item{\textbf{Compromise programming} \cite{Gear}:} The scalar program of this method is written as
\begin{align}\label{S2}
\nonumber  & \min \ (\sum_{i=1}^p \lambda_i(f_i(x)-y^U_i)^p)^\frac{1}{p}\\
& \ s.t. \ x\in X,
\end{align}
where $\lambda\in \Int(\mathbb{R}^p_+)$, $p>1,$ and $y^U$ is a utopia point. As $$g(y):=(\sum_{i=1}^p \lambda_i(y-y^U_i)^p)^\frac{1}{p}$$ is subdifferential-positive on $Y=\{y: y\geq y^I\}$, Corollary \ref{1+} implies that the optimal solutions of \eqref{S2} are properly efficient for \eqref{MO}. Recall that $y^I$ is the ideal point.

\item{\textbf{Conic scalarization method} \cite{Kas1}:} The scalar problem of this method can be written as
\begin{align}\label{S3}
\nonumber  & \min \ \sum_{i=1}^p \lambda_i(f_i(x)-y^r_i)+\alpha \sum_{i=1}^p|f_i(x)-y^r_i|\\
& \ s.t. \ x\in X,
\end{align}
where $\lambda\in \Int(\mathbb{R}^p_+)$, $y^r\in \mathbb{R}^p$, and $\alpha\in \mathbb{R}_+$. By setting $g(y):=\sum_{i=1}^p \lambda_i(y_i-y^r_i)+\alpha \sum_{i=1}^p |y_i-y^r_i|$, the function $g$ is convex but not necessarily differentiable on $ \mathbb{R}^p$. It is seen that $g$ is subdifferential-positive on $\mathbb{R}^p$ for $0\leq \alpha< \lambda_i, (i=1, ..., p)$. Therefore, by Theorem \ref{T1}, the optimal solutions of \eqref{S3} are properly efficient for \eqref{MO} when $0\leq \alpha< \lambda_i, (i=1, ..., p)$.

\item{\textbf{Modified weighted Tchebycheff method} \cite{Kal}:} This method is written as
\begin{align}\label{S4}
\nonumber  & \min \ \max_{i}\{ \lambda_i(f_i(x)-y^U_i)+\alpha e^T(f(x)-y^U)\}\\
& \ s.t. \ x\in X,
\end{align}
where $\lambda\in \Int(\mathbb{R}^p_+)$, $\alpha>0$, and $y^U$ is a utopia point. By setting $g(y):=\max_{i}\{ \lambda_i(y_i-y^U_i)+\alpha e^T(y-y^U)$, the function $g$ is convex but not necessarily differentiable on $ \mathbb{R}^p$. It is seen that $g$ is subdifferential-positive on $ \mathbb{R}^p$. So, Theorem \ref{T1} implies that each optimal solution of
\eqref{S4} is properly efficient for \eqref{MO}.
\end{itemize}

As a multi-objective optimization problem generally has numerous properly efficient solutions, one important question in this context is under which conditions a parametric scalarization technique is able to generate each properly efficient solution. A general parametric scalarization problem can be written as
\begin{align}\label{PSc}
\nonumber (P_u):~~& \min \ g(f(x),u)\\
& \ s.t. \ x\in X,
\end{align}
where $u$ is a parameter in $U$. The set $Y\subseteq \mathbb{R}^p$ with $f(X)\subseteq Y$ and the function $g: Y\times U\to \mathbb{R}\cup \{+\infty\}$ are given. Theorem \ref{T2} below, gives sufficient conditions under which the parametric problem \eqref{PSc} generates all properly efficient solutions of \eqref{MO}.

\begin{theorem}\label{T2}
Let $f(X)\subseteq Y$. If for each $\bar y\in Y$ and each $\delta>0$, there exists some $u\in U$  with
\begin{equation}\label{2+}
\{y\in Y: g(y,u)< g(\bar y,u)\}\subseteq (\bar y-C_\delta),
\end{equation}
then the parametric problem \eqref{PSc} generates all properly efficient solutions of \eqref{MO}, i.e., if $\hat x$ is a properly efficient solution for \eqref{MO}, then $\hat x$ solves \eqref{PSc} for some $u\in U.$
\end{theorem}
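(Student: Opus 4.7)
The plan is to take a properly efficient solution $\hat x$ and exhibit a parameter $u\in U$ for which $\hat x$ solves $(P_u)$, by matching the sublevel-set inclusion in the hypothesis with the cone separation guaranteed by Henig's proper efficiency definition. Concretely, I would first invoke Definition \ref{Henig} to obtain a convex pointed cone $C$ with $\mathbb{R}^p_+\setminus\{0\}\subseteq \Int(C)$ and
$$
f(X)\cap\bigl(f(\hat x)-C\bigr)=\{f(\hat x)\}.
$$
Then I would apply Remark \ref{p09} to extract some $\delta>0$ such that $\mathbb{R}^p_+\subseteq C_\delta\subseteq C$. This choice of $\delta$ is the bridge between the abstract cone $C$ and the explicit family $\{C_\delta\}_{\delta>0}$ appearing in the hypothesis.

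Next, I would set $\bar y:=f(\hat x)\in Y$ (using $f(X)\subseteq Y$) and apply the hypothesis at this $\bar y$ and this particular $\delta$ to obtain a parameter $u\in U$ with
$$
\{y\in Y:\, g(y,u)<g(f(\hat x),u)\}\subseteq f(\hat x)-C_\delta.
$$
I would then argue by contradiction: if $\hat x$ is not optimal for $(P_u)$, then there exists $x\in X$ with $g(f(x),u)<g(f(\hat x),u)$. Since $f(x)\in Y$, the displayed inclusion forces $f(x)\in f(\hat x)-C_\delta\subseteq f(\hat x)-C$, whence $f(x)\in f(X)\cap (f(\hat x)-C)=\{f(\hat x)\}$. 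Thus $f(x)=f(\hat x)$, which immediately gives $g(f(x),u)=g(f(\hat x),u)$, contradicting the strict inequality.

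I do not anticipate a genuinely hard step here; the whole argument is a clean matching of Henig's cone with the parametric sublevel-set hypothesis, mediated by the family $C_\delta$. The only point that requires a little care is the initial choice of proper efficiency notion: working directly from Geoffrion's definition would force an awkward bound-chasing argument, whereas Henig's formulation immediately yields the cone inclusion that pairs with \eqref{2+}. This is exactly the sort of situation foreshadowed by the remark after Definitions \ref{Geo}--\ref{Henig} that different equivalent definitions make different results more transparent, and it is the main reason I would route the proof through Henig rather than Benson or Geoffrion.
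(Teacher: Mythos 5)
Your proposal is correct and follows essentially the same route as the paper's own proof: invoke Henig's definition together with Remark \ref{p09} to obtain a $\delta>0$ with $f(X)\cap\bigl(f(\hat x)-C_\delta\bigr)=\{f(\hat x)\}$, set $\bar y=f(\hat x)$, and apply hypothesis \eqref{2+} to produce the parameter $u$ for which $\hat x$ solves $(P_u)$. The only difference is that you spell out the final contradiction argument (which the paper leaves implicit), and that added detail is accurate.
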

\begin{proof}
Assume $\hat x$ is a properly efficient solution of \eqref{MO}. Due to the Henig's proper efficiency definition, invoking Remark \ref{p09}, there is some $\delta>0$ such that
$f(X)\cap (f(\hat x)-C_{\delta})=\{f(\hat x)\}$. Now, by setting $\bar y:=f(\hat x)$, and applying the assumption of the theorem, there exists some $u\in U$ such that
$$
\hat x \in \argmin\{g(f(x); u): x\in X\}.
$$
This completes the proof.
\end{proof}

As an application of Theorem \ref{T2}, we establish that conic scalarization method \cite{Kas1} produces all properly efficient solutions. To this end, it is enough to show that this method fulfills all assumptions of Theorem \ref{T2}. Let $\bar y\in \mathbb{R}^p$ and $\delta >0$ be given. We show that for $\lambda=e$ and $\alpha\in (\frac{1}{2\delta+1},1)$, and the reference point $y^r=\bar y$  we have
$$
\left\{y\in \mathbb{R}^p: \sum_{i=1}^p (y_i-\bar y_i)+\alpha \sum_{i=1}^p |y_i-\bar y_i|<0\right\}\subseteq (\bar y-C_\delta),
$$
or  equivalently
$$\left\{y\in \mathbb{R}^p: \sum_{i=1}^p y_i+\alpha \sum_{i=1}^p |y_i|<0\right\}\subseteq -C_\delta.$$
In addition,
 $$\left\{y\in \mathbb{R}^p: \sum_{i=1}^p y_i+\alpha \sum_{i=1}^p |y_i|<0\right\}=\left\{y\in \mathbb{R}^p: \sum_{i=1}^p y_i+ \alpha \sum_{i=1}^p \beta_iy_i<0,~ \beta\in\{-1, 1\}^p\right\}.$$
As for $j$,
$$e^j+\delta e=\frac{1}{2}(2\delta+1)\big(\sum_{i=1}^p e^i+ \frac{1}{2\delta+1} (e^j-\sum_{\substack {i=1 \\ i\neq j}}^p e^i)\big)$$
the inclusion follows from the fact that
$\left\{y\in \mathbb{R}^p: \sum_{i=1}^p y_i+\alpha_2 \sum_{i=1}^p |y_i|<0\right\}\subseteq \left\{y\in \mathbb{R}^p: \sum_{i=1}^p y_i+\alpha_1 \sum_{i=1}^p |y_i|<0\right\}$ for $0<\alpha_1<\alpha_2$.

In the same line, one can show that the modified weighted Tchebycheff method satisfies the conditions of Theorem \ref{T2}, and so, by suitable choosing $\lambda$ and $\alpha$, this technique is able to generate all properly efficient solutions.

By Theorem \ref{T1}, one can obtain a properly efficient solution. However, scalarization methods can be exploited to recognize nonexistence of the properly efficient solution set. Consider the following scalarization problem:
\begin{align}\label{Scu}
\nonumber & \min \ g(f(x))\\
& \ s.t. \ x\in X,\\
\nonumber & \  \ \ \ \ \   f(x)\leqq \epsilon,
\end{align}
in which $Y\subseteq \mathbb{R}^p$ is a given set containing $f(X)$. Furthermore, $g: Y\to \mathbb{R}\cup \{\infty\}$ is a lower semi-continuous function and $\epsilon \in \mathbb{R}^p$.

\begin{theorem}\label{T4}
If Problem \eqref{Scu} is unbounded, then multi-objective optimization Problem \eqref{MO} does not have any properly efficient solution.
\end{theorem}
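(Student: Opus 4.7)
The plan is to argue by contradiction via Benson's characterization of proper efficiency (Definition \ref{BenP}). Assume the infimum of \eqref{Scu} is $-\infty$, so there exists a sequence $\{x_\nu\}\subseteq X$ with $f(x_\nu)\leqq\epsilon$ and $g(f(x_\nu))\to-\infty$. Suppose for contradiction that some $\bar x\in X$ is properly efficient for \eqref{MO}. The goal is to produce a nonzero element of $\cl(\con(f(X)+\mathbb{R}^p_+-f(\bar x)))\cap(-\mathbb{R}^p_+)$, which contradicts Definition \ref{BenP}.

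First I would show that $\|f(x_\nu)-f(\bar x)\|\to\infty$. The sequence $\{f(x_\nu)\}$ is bounded above component-wise by $\epsilon$, so if it were also bounded in norm it would admit a convergent subsequence $f(x_{\nu_k})\to y^{*}$; then lower semi-continuity of $g$ (invoking that $Y$ is closed, consistent with the standing convention of Definition \ref{D1}) would yield $g(y^{*})\leq\liminf_k g(f(x_{\nu_k}))=-\infty$, which is incompatible with $g$ being $\mathbb{R}\cup\{+\infty\}$-valued. Hence some component of $f(x_\nu)$ must tend to $-\infty$, so $\|f(x_\nu)-f(\bar x)\|\to\infty$.

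Next I would run the standard recession construction. Set $t_\nu:=1/\|f(x_\nu)-f(\bar x)\|$, so $t_\nu\downarrow 0$, and $d_\nu:=t_\nu(f(x_\nu)-f(\bar x))$. Taking the $\mathbb{R}^p_+$-summand equal to $0$, each $d_\nu$ is a unit vector lying in $\con(f(X)+\mathbb{R}^p_+-f(\bar x))$. By compactness of the unit sphere, pass to a subsequence with $d_\nu\to d$ and $\|d\|=1$; thus $d\in\cl(\con(f(X)+\mathbb{R}^p_+-f(\bar x)))$. Component-wise, $d_{\nu,i}=t_\nu(f_i(x_\nu)-f_i(\bar x))\leq t_\nu(\epsilon_i-f_i(\bar x))\to 0$, so $d\leqq 0$. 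Since $\|d\|=1$, $d$ is a nonzero element of $-\mathbb{R}^p_+$, contradicting Benson's condition for $\bar x$.

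The only delicate point is the unboundedness step: the lsc argument at the accumulation point $y^{*}$ tacitly requires $y^{*}\in Y$, i.e., closedness of $Y$; this is not spelled out in the statement but matches the standing convention already adopted in Definition \ref{D1} and should be read into the hypotheses of Theorem \ref{T4}. Granted that, the rest is a routine use of the componentwise bound $f(x_\nu)\leqq\epsilon$ to force the normalized recession direction into $-\mathbb{R}^p_+$ in the limit.
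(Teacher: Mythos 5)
Your proof is correct and follows essentially the same route as the paper: both arguments use the componentwise bound $f(x)\leqq \epsilon$ to push a normalized unbounded direction of the feasible image set into $-\mathbb{R}^p_+$, contradicting proper efficiency (the paper phrases this via Henig's cone and boundedness of $\{y\in f(X): y\leqq\epsilon\}$, you via Benson's closed-cone condition in Definition \ref{BenP}), and both invoke lower semi-continuity of $g$ to dispose of the bounded case. One small repair: unboundedness of $\{f(x_\nu)\}$ only yields a subsequence with norms tending to infinity, so you should either run the lsc argument against every bounded subsequence or simply pass to a subsequence along which $\|f(x_\nu)-f(\bar x)\|\to\infty$, which is harmless since the Benson contradiction is extracted along a subsequence anyway.
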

\begin{proof}
The proof is by contradiction.  Suppose that  $\bar x$ is a properly efficient solution of \eqref{MO}. Due to the Henig proper efficiency, there exists a convex pointed cone $C\subseteq \mathbb{R}^p$ with $\mathbb{R}^p_+\setminus\{0\}\subseteq \Int(C)$ and  $(f(X)-f(\bar x))\cap (-C)=\{0\}$. We show that the set $\{y\in f(X):~y\leqq \epsilon\}$ is bounded. If not, there exist a nonnegative sequence $\{t_\nu\}_\nu$ and a sequence $\{d_\nu\}_\nu\subseteq \mathbb{R}^p_+$ such that $\|d_\nu\|=1$ for each $\nu$; and $t_\nu\to+\infty$ and $\{\epsilon-t_\nu d_\nu\}_\nu\subseteq  f(X)$. Without loss of generality,  we may assume $d_\nu\to \bar d\in \mathbb{R}^p_+\setminus \{0\}$. As $\mathbb{R}^p_+\setminus\{0\}\subseteq \Int(C)$,
for $\nu$ sufficiently large, we have
$$
\frac{1}{t_\nu}(\epsilon-t_\nu d_\nu-f(\bar x)) \in  -C\setminus\{0\},
$$
which contradicts  $(f(X)-f(\bar x))\cap (-C)=\{0\}$. Therefore, the set $\{y\in f(X):~y\leqq \epsilon\}$ is bounded. So, minimum of the lower semi-continuous function $g$ on
$cl(\{y: y\in f(X), y\leqq \epsilon\})$ is finite. This implies the finiteness of the optimal value of Problem \eqref{Scu} and completes the proof.
\end{proof}

As an application, we apply Theorem \ref{T4} to Benson's problem \cite{BenS} written as
 \begin{align}\label{B.S}
\nonumber \min \ & \sum_{i=1}^p f_i(x)\\
   s.t. \  & f(x)\leqq f(\bar x), \\
\nonumber   &  \ x\in X,
\end{align}
where $\bar x\in X$. Benson \cite{BenS} showed that, under convexity, if Problem \eqref{B.S} is unbounded, then Problem \eqref{MO} does not have any properly efficient solution. Soleimani-damaneh and Zamani  \cite{Zam1} established this result without convexity. In addition, Zamani \cite{Zam3} proved it for a general ordering cone. It is readily seen that the above-mentioned results reported in \cite{BenS,Zam1} follow from Theorem \ref{T4}.

Another scalarization technique, to which one can apply Theorem \ref{T4}, is Pascoletti-Serafini scalarization. It is known that a variety of important scalarization techniques can be modelled as special cases of Pascoletti-Serafini scalarization \cite{Eich, Kass}. This method is formulated as
 \begin{align}\label{D.S}
\nonumber \min \ & t\\
   s.t. \  & f(x)\leqq a+tr, \\
\nonumber   &  \ x\in X,
\end{align}
where $a\in\mathbb{R}^p$ and $r\in \mathbb{R}^p_{+}$. If \eqref{D.S} is unbounded, then
 \begin{align}
\nonumber \min \ & g(f(x))\\
   s.t. \  & f(x)\leqq \epsilon, \\
\nonumber   &  \ x\in X,
\end{align}
with $g(y)=\min\{ t: y\leqq a+tr\}$ is unbounded for some $\epsilon\in \mathbb{R}^p.$ The considered $g$ is lower semi-continuous. Hence, by Theorem \ref{T4}, one can infer that if Pascoletti-Serafini scalarization with $a\in \mathbb{R}^p$ and $r\in \mathbb{R}^p_+$ is unbounded, then Problem \eqref{MO} does not have any properly efficient solution.
\section{Proper efficiency and transformation}\label{S.tr}

In this section, we investigate the relationship between the multi-objective problem \eqref{MO} and its objective-transformed correspondence in regard to the proper efficiency. Let $Y\subseteq \mathbb{R}^p$ satisfying $f(X)\subseteq Y$ and vector-valued function $\phi: Y\to \mathbb{R}^q$ be given. An objective-transformed version of \eqref{MO}, invoking $\phi$, can be written as
\begin{align}\label{MO1}
\nonumber & \min \ \phi(f(x))\\
& \ s.t. \ x\in X.
\end{align}
In the sequel, we provide some sufficient conditions under which properly efficient solution sets of Problems \eqref{MO} and \eqref{MO1} coincide. It is known when $\phi$ is $\mathbb{R}^p_+$-transformation on $Y$, then the efficient solutions of  Problems \eqref{MO} and \eqref{MO1} are the same \cite{Hir}; A function $\phi: Y\to Z\subseteq\mathbb{R}^p$ is called $\mathbb{R}^p_+$-transformation on $Y$ if it is bijective and
\begin{align*}
&\bar y\leqq \hat y \Leftrightarrow \phi(\bar y)\leqq \phi(\hat y),~~\forall \bar y, \hat y\in Y
\end{align*}

By mean value theorem \cite{Mord}, if  bijective $\phi$ is subdifferential-positive on $Y$, then it is $\mathbb{R}^p_+$-transformation on $Y$.

By the following example, we show that $\mathbb{R}^p_+$-transformation property of $\phi$  on $Y$ is not sufficient for coincidence of the properly efficient solutions of  Problems \eqref{MO} and  \eqref{MO1}.
\begin{example}
Consider the multi-objective problem
\begin{align*}
\nonumber & \min \ \begin{bmatrix} x^2\\  x
\end{bmatrix}\\
& \ s.t. \ x\leq0.
\end{align*}
Let $\phi: \mathbb{R}_+\times (-\mathbb{R}_+)\to \mathbb{R}^2$ be given by $\phi(y)=(\sqrt{y_1},~y_2)^T$. It can be seen that $\phi$ is $\mathbb{R}^2_+$-transformation on $\mathbb{R}_+\times (-\mathbb{R}_+)$. The point $\bar x=0$ is not properly efficient, because there does not exist $\lambda \in \Int(\mathbb{R}^2_+)$ such that
$\bar x\in\argmin_{x\leq 0}\{\lambda_1x^2+\lambda_2x\}$; Notice that the considered multi-objective problem is convex. Nevertheless, $\bar x$ is properly efficient for the problem transformed by $\phi$. This follows form the fact that each efficient solution of a linear multi-objective optimization problem is properly efficient \cite{Eh}.
\end{example}

Throughout Lemma \ref{L1} and Theorem \ref{T3}, we provide sufficient conditions for equality of the properly efficient sets of Problems \eqref{MO} and \eqref{MO1}.

\begin{lemma}\label{L1}
Let $f(X)\subseteq Y$ be closed and convex and let $\phi: Y\to \mathbb{R}^q$  be subdifferential-positive on $Y$. Then the set of properly efficient solutions of \eqref{MO1} is a subset of that of \eqref{MO}.
\end{lemma}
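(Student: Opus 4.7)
The plan is to argue by contradiction, in complete parallel with the proof of Theorem~\ref{T1}. Suppose $\bar x$ is properly efficient for the transformed problem~\eqref{MO1} but fails to be properly efficient for~\eqref{MO}. Invoking Benson's definition applied to~\eqref{MO}, I would extract sequences $\{x_\nu\}\subseteq X$, $\{d_\nu\}\subseteq\mathbb R^p_+$ and $\{t_\nu\}\subseteq\mathbb R_+$ exactly as in the opening of the proof of Theorem~\ref{T1}; after passing to a subsequence, one may assume
$$
u_\nu:=\frac{f(x_\nu)-f(\bar x)}{\|f(x_\nu)-f(\bar x)\|}\longrightarrow -\bar d
$$
for some $\bar d\in\mathbb R^p_+\setminus\{0\}$.

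Next I would apply Mordukhovich's mean value theorem componentwise to $\phi=(\phi_1,\ldots,\phi_q)$ on the segment $[f(x_\nu),f(\bar x)]\subseteq Y$, whose inclusion relies on the convexity of $Y$. For every $\nu$ and every $i\in\{1,\ldots,q\}$ this produces sequences $\{y^{k,i}_\nu\}_k\subseteq Y$ and $\xi^{k,i}_\nu\in\hat\partial\phi_i(y^{k,i}_\nu)$ with
$$
\liminf_{k\to\infty}\langle\xi^{k,i}_\nu,\,f(x_\nu)-f(\bar x)\rangle\ \geq\ \phi_i(f(x_\nu))-\phi_i(f(\bar x)).
$$
Subdifferential-positivity guarantees $\xi^{k,i}_\nu\geqq\epsilon_i$ for some $\epsilon_i\in\mathrm{int}(\mathbb R^p_+)$, so repeating verbatim the concluding estimate of Theorem~\ref{T1} I would obtain, for each $i$ and all sufficiently large $\nu$,
$$
\liminf_{k\to\infty}\langle\xi^{k,i}_\nu,u_\nu\rangle\ <\ 0,
$$
which, after multiplying through by $\|f(x_\nu)-f(\bar x)\|>0$, forces $\phi_i(f(x_\nu))<\phi_i(f(\bar x))$.

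Because the latter strict inequality holds along the same $\nu$ for every $i$, we conclude $\phi(f(x_\nu))<\phi(f(\bar x))$, so $\bar x$ is not even efficient for~\eqref{MO1}, contradicting the starting assumption. The main obstacle I anticipate is precisely the strict negative bound on $\liminf_k\langle\xi^{k,i}_\nu,u_\nu\rangle$: subdifferential-positivity only controls the regular subgradients from below and permits them to be unbounded, so one has to write $\xi^{k,i}_\nu=\epsilon_i+\eta^{k,i}_\nu$ with $\eta^{k,i}_\nu\geqq 0$ and exploit that $u_\nu$ eventually sits in a small neighbourhood of $-\bar d\in -\mathbb R^p_+$ to keep the fixed contribution $\langle\epsilon_i,u_\nu\rangle\to -\langle\epsilon_i,\bar d\rangle<0$ dominant over $\langle\eta^{k,i}_\nu,u_\nu\rangle$. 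This is the same technical point already handled inside the proof of Theorem~\ref{T1}, and verifying it in the present vector-valued componentwise setting is the only genuine work required.
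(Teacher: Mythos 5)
Your proposal is correct and follows the paper's skeleton up to a point --- indirect proof via Benson's definition, normalization of $f(x_\nu)-f(\bar x)$ to a direction $u_\nu\to-\bar d$, and a componentwise application of the approximate mean value theorem combined with subdifferential-positivity --- but the endgame is genuinely different. The paper forms the image-space sequence $s_\nu=\big(\phi(f(x_\nu))-\phi(f(\bar x))\big)/\|f(x_\nu)-f(\bar x)\|$, distinguishes the two cases where $s_\nu$ has a cluster point or $\|s_\nu\|\to\infty$, and in each case produces a nonzero direction $-\hat d$ with $\hat d\in\mathbb{R}^q_+$ that violates Benson proper efficiency of $\bar x$ for \eqref{MO1}. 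You instead feed the strict negativity of $\liminf_{k}\langle\xi^{k,i}_\nu,u_\nu\rangle$ back into the mean value inequality to conclude $\phi_i(f(x_\nu))<\phi_i(f(\bar x))$ for every $i$ and all large $\nu$, contradicting plain efficiency of $\bar x$ for \eqref{MO1}; this renders the paper's cluster-point/unbounded dichotomy unnecessary, uses only that proper efficiency implies efficiency, and even yields the stronger conclusion that the $x_\nu$ strictly dominate $\bar x$ in the transformed problem. Both routes hinge on the same key estimate, namely that $\liminf_{k}\langle\xi^{k,i}_\nu,u_\nu\rangle<0$ for $\nu$ large although $u_\nu$ is only approximately in $-\mathbb{R}^p_+$ and the regular subgradients are merely bounded below by $\epsilon_i$; your sketched justification (write $\xi=\epsilon_i+\eta$ with $\eta\geqq 0$ and let the $\epsilon_i$-term dominate) is, strictly speaking, incomplete, since $\eta$ is not bounded above and a large $\eta_j$ multiplied by a small positive component $u_{\nu,j}$ (in a coordinate with $\bar d_j=0$) need not be dominated --- but this is exactly the step the paper itself asserts without further detail in Theorem \ref{T1} and in its own proof of Lemma \ref{L1}, so it is a limitation shared with the paper rather than a gap introduced by your argument.
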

\begin{proof}
Similar to the proof of Theorem \ref{T1}, we extend $\phi$ as follows,
$$\phi(y)=\left\{\begin{array}{ll}
      \phi(y), & y\in Y \\
      +\infty, & y\notin Y
   \end{array}\right.
$$
For convenience, let $\phi=\bar\phi$. By indirect proof, assume that $\bar x$ is a properly efficient solutions of \eqref{MO1}, while it is not a properly efficient solution of \eqref{MO}. By Benson's proper efficiency, there exist
$\{x_\nu\}_\nu\subseteq X$, $\{d_\nu\}_\nu\subseteq \mathbb{R}^p_+$ and $\{t_\nu\}_\nu\subseteq \mathbb{R}_+$ such that
$$
\lim_{\nu\to \infty} t_\nu(f(x_\nu)+d_\nu-f(\bar x))=-d,
$$
for some $d\in\mathbb{R}^p_+\setminus\{0\}$. Without loss of generality, one may assume
\begin{align}\label{T2.1}
\lim_{\nu\to \infty} \frac{f(x_\nu)-f(\bar x)}{\|f(x_\nu)-f(\bar x)\|}=-\bar d,
\end{align}
where $0\neq \bar d\in\mathbb{R}^p_+$.  For $i=1,...,q$, there are  sequences $\{y^k_{\nu, i}\}_k\subseteq Y$ and $\{ \xi^k_{\nu, i}\}_k$ such that $y^k_{\nu, i} \to y_{\nu,i}$ as $k\to\infty$,
$ \xi^k_{\nu, i}\in \hat\partial g(y^k_{\nu, i})$ and
\begin{align}\label{T2.2}
\liminf_{k\to \infty} \left\langle \xi^k_{\nu, i}, f(x_\nu)-f(\bar x)\right\rangle\geq \phi_i(f(x_\nu))-\phi_i(f(\bar x)),
\end{align}
where  $y_{\nu, i}\in [f(x_\nu), f(\bar x)]$. Consider the sequence $$s_\nu:=\frac{\phi(f(x_\nu))-\phi(f(\bar x))}{\|f(x_\nu)-f(\bar x)\|}.$$ This sequence  either has a cluster point or its norm tends to infinity. We investigate both cases separately, and show that both cases would lead to a contradiction. First, without loss of generality, suppose that  $s_\nu$ converges to some $-\hat d$. By \eqref{T2.1},   \eqref{T2.2} and subdifferential-positive property of $\phi$ on $Y$, we  have $\hat d \in \Int(\mathbb{R}^p_+)$, which contradicts the proper efficiency of $\bar x$ for \eqref{MO1}. For latter case, without loss of generality, one may assume that ${s_\nu}/{\|s_\nu\|}$ tends to some nonzero vector  $-\hat d$. Similarly, we get $ \hat d \in\mathbb{R}^p_+$, which contradicts the proper efficiency of $\bar x$ for problem \eqref{MO1}, and the proof is complete.
\end{proof}

In the next theorem, we present some sufficient conditions for equality of the properly efficient sets of Problems \eqref{MO} and \eqref{MO1}.
\begin{theorem}\label{T3}
Assume the following conditions:
\begin{enumerate}[i)]
\item
$Y_1, Y_2\subseteq \mathbb{R}^p$ are closed and convex.
\item
$\phi: Y_1\to Y_2$ is bijective, and both $\phi$ and $\phi^{-1}$ are subdifferential-positive on $Y_1$ and $Y_2$, respectively.
\item
$f(X)\subseteq Y_1$ and $\phi\big(f(X)\big)\subseteq Y_2$.
\end{enumerate}
Then the properly efficient solutions of \eqref{MO} and \eqref{MO1} are the same.
\end{theorem}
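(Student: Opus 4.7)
The plan is to derive Theorem \ref{T3} as a double application of Lemma \ref{L1}, using the symmetric roles played by $\phi$ and $\phi^{-1}$ under the hypotheses.

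First I would observe that Lemma \ref{L1} applied directly to $\phi: Y_1\to \mathbb{R}^q$, together with the assumptions that $Y_1$ is closed and convex, $f(X)\subseteq Y_1$, and $\phi$ is subdifferential-positive on $Y_1$, immediately yields the inclusion
\begin{equation*}
\{\text{properly efficient solutions of \eqref{MO1}}\}\subseteq\{\text{properly efficient solutions of \eqref{MO}}\}.
\end{equation*}
This takes care of one direction without any further work.

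For the reverse inclusion, the key idea is to view \eqref{MO} as the $\phi^{-1}$-transformation of \eqref{MO1}. Setting $\tilde f:=\phi\circ f$, hypothesis (iii) gives $\tilde f(X)=\phi(f(X))\subseteq Y_2$; hypothesis (i) says $Y_2$ is closed and convex; and hypothesis (ii) provides that $\phi^{-1}:Y_2\to Y_1$ is subdifferential-positive on $Y_2$. Since $\phi^{-1}\circ\tilde f=f$, a second application of Lemma \ref{L1}, now to the multi-objective problem with objective $\tilde f$ and transformation $\phi^{-1}$, delivers
\begin{equation*}
\{\text{properly efficient solutions of \eqref{MO}}\}\subseteq\{\text{properly efficient solutions of \eqref{MO1}}\}.
\end{equation*}
Combining the two inclusions finishes the proof.

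There is really no hard step here once Lemma \ref{L1} is in place; the only thing to verify carefully is that the roles of $Y_1,Y_2$ and of $\phi,\phi^{-1}$ are genuinely symmetric, so that the lemma's hypotheses (closedness and convexity of the ambient set, containment of the image of $X$, subdifferential-positivity of the transformation) are met in both applications. This symmetry is guaranteed precisely by the bijectivity in (ii) together with the two-sided subdifferential-positivity assumption, which is the reason we need both $\phi$ and $\phi^{-1}$ to be subdifferential-positive rather than just $\phi$ alone.
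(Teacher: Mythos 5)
Your proposal is correct and coincides with the paper's own argument: the paper proves Theorem \ref{T3} simply by the statement ``It follows from Lemma \ref{L1},'' which is precisely the symmetric double application you spell out (once to $\phi$ on $Y_1$, once to $\phi^{-1}$ on $Y_2$ viewing \eqref{MO} as the $\phi^{-1}$-transform of \eqref{MO1}). Your write-up just makes explicit the bookkeeping that the paper leaves implicit.
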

\begin{proof}
It follows from Lemma \ref{L1}.
\end{proof}

In \cite{Za1}, Zarepisheh et al. have proved, given integer $l>0$, the set of properly efficient solutions of \eqref{MO} coincides with that of the following problem
\begin{align*}
\nonumber & \min \
 \begin{bmatrix}
f_1(x)^l\\
 \vdots \\
 f_p(x)^l
\end{bmatrix}\\
& \ s.t. \ x\in X,
\end{align*}
provided that  $y^I>0$. This result follows from Theorem \ref{T3}. It is enough to consider $Y_1=\{y: y\geqq y^I\}$ and  $Y_2=\{y: y\geqq \bar y^{I}\}$, where
 $\bar y^{I}_i=(y^{I}_i)^l$ for $i=1, ..., p$, and $\phi(f(x))=(f_1(x)^l,\ldots,f_p(x)^l)^T.$

As we considered a general transformation, the conditions of Theorem \ref{T3} might be restrictive in some cases. In the following results, we give some milder conditions  for a locally Lipschitz  transformation. In this result, $\partial\phi(\cdot)$ stands for the generalized Jacobian of $\phi$. Furthermore, for matrix $M$, the inequality $M\geqq 0$ is componentwise.

\begin{lemma}\label{L2}
Assume the following conditions:
\begin{enumerate}[(i)]
\item
The properly efficient set of \eqref{MO} is non-empty.
\item
$Y$ is open and convex satisfying $\cl(f(X))\subseteq Y$.
\item
$\phi: Y\to \mathbb{R}^q$  is locally Lipschitz on $Y$.
\item
$\ker(M)\cap \mathbb{R}^p_+=\{0\}, \  \forall y\in Y, \forall M\in\partial{\phi}(y)$.
\item
$M\geqq 0, \  \forall y\in Y, \forall M\in\partial{\phi}(y)$.
\end{enumerate}
Then the set of properly efficient solutions of \eqref{MO1} is a subset of that of \eqref{MO}.
\end{lemma}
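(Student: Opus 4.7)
The strategy is to mirror Lemma~\ref{L1}: argue by contradiction, use Benson's characterization to extract a bad direction from the assumed failure of proper efficiency for \eqref{MO}, push this direction through $\phi$ via a mean value theorem, and derive a contradiction to proper efficiency for \eqref{MO1}. The novelty is to replace the global subdifferential-positivity of $\phi$ in Lemma~\ref{L1} by local Lipschitzness together with the pointwise Jacobian conditions~(iv)--(v).

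Suppose that $\bar x$ is properly efficient for \eqref{MO1} but not for \eqref{MO}. Exactly as in the proof of Lemma~\ref{L1}, Benson's characterization supplies a sequence $\{x_\nu\}\subseteq X$ and a direction $0\neq\bar d\in\mathbb{R}^p_+$ with
$$\frac{f(x_\nu)-f(\bar x)}{\|f(x_\nu)-f(\bar x)\|}\to -\bar d.$$
Using openness of $Y$ and $\cl(f(X))\subseteq Y$, I would pass to a further subsequence so that $f(x_\nu)\to f(\bar x)$, whence the segments $[f(\bar x),f(x_\nu)]$ eventually sit in a fixed convex neighborhood $B\subseteq Y$ on which $\phi$ is $L$-Lipschitz. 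Clarke's vector-valued mean value theorem then supplies matrices
$$M_\nu\in\conv\bigcup_{c\in[f(\bar x),f(x_\nu)]}\partial\phi(c),\qquad \|M_\nu\|\leq L,$$
with $\phi(f(x_\nu))-\phi(f(\bar x))=M_\nu\bigl(f(x_\nu)-f(\bar x)\bigr)$.

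Extract a subsequence with $M_\nu\to M^*$. Because the segments shrink to $\{f(\bar x)\}$, upper semi-continuity of $\partial\phi$ and convexity of $\partial\phi(f(\bar x))$ force $M^*\in\partial\phi(f(\bar x))$. Dividing the mean value identity by $\|f(x_\nu)-f(\bar x)\|$ and passing to the limit yields
$$\frac{\phi(f(x_\nu))-\phi(f(\bar x))}{\|f(x_\nu)-f(\bar x)\|}\longrightarrow -M^*\bar d.$$
Condition~(v) applied to $M^*$ gives $M^*\bar d\geqq 0$, and condition~(iv) gives $M^*\bar d\neq 0$; hence $-M^*\bar d$ is a nonzero element of $-\mathbb{R}^q_+$ in $\cl(\con(\phi(f(X))+\mathbb{R}^q_+-\phi(f(\bar x))))$, which by Benson's characterization contradicts proper efficiency of $\bar x$ for \eqref{MO1}.

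The delicate step is the reduction to $f(x_\nu)\to f(\bar x)$: in Lemma~\ref{L1}, global subdifferential-positivity made direction-only information sufficient, but here the $M_\nu$ must live in a single Lipschitz neighborhood so that compactness can be invoked. I suspect that hypothesis~(i), the non-emptiness of the properly efficient set of \eqref{MO}, is what enables this localization, perhaps by comparing $\bar x$ against a known properly efficient point or by invoking Theorem~\ref{T1}. A secondary subtlety is certifying $M^*\in\partial\phi(f(\bar x))$ so that (iv) and (v) apply at a single point; this rests on the upper semi-continuity and convexity of the generalized Jacobian.
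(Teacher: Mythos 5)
Your overall architecture (contradiction via Benson at $\bar x$ for \eqref{MO}, a Clarke-type mean value theorem along the segments $[f(\bar x),f(x_\nu)]$, then conditions (iv)--(v) to push the bad direction $-\bar d$ through $\phi$ and contradict Benson proper efficiency of $\bar x$ for \eqref{MO1}) is exactly the paper's strategy, and the final step $-M^*\bar d\in(-\mathbb{R}^q_+)\setminus\{0\}$ is handled the same way. However, the step you yourself flag as delicate is a genuine gap, and it is not fixable in the form you state it: openness of $Y$ and $\cl(f(X))\subseteq Y$ do \emph{not} allow you to pass to a subsequence with $f(x_\nu)\to f(\bar x)$. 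Benson's characterization only controls the normalized differences $\bigl(f(x_\nu)-f(\bar x)\bigr)/\|f(x_\nu)-f(\bar x)\|\to-\bar d$; the sequence $\{f(x_\nu)\}$ may be unbounded, or it may converge to a point $\bar y=f(\bar x)-c\,\bar d$ with $c>0$, in which case no subsequence tends to $f(\bar x)$, the segments do not shrink to a point, and your certification $M^*\in\partial\phi(f(\bar x))$ collapses. Since hypotheses (iv)--(v) are assumed at \emph{every} $y\in Y$, insisting on landing in $\partial\phi(f(\bar x))$ is also unnecessary.

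The paper closes the gap by splitting into the two cases you are missing. If $\{f(x_\nu)\}$ has a cluster point $\bar y\in Y$ (here $\cl(f(X))\subseteq Y$ is used), the mean value theorem is applied on $[f(x_\nu),f(\bar x)]$, the mean-value points and Jacobians converge (after extraction) to points of the limiting segment $[\bar y,f(\bar x)]\subseteq Y$ and to matrices in the corresponding generalized Jacobians, and (iv)--(v) at those points give $\frac{\phi(f(x_\nu))-\phi(f(\bar x))}{\|f(x_\nu)-f(\bar x)\|}\to-\hat d$ with $\hat d\in\mathbb{R}^q_+\setminus\{0\}$, contradicting proper efficiency of $\bar x$ for \eqref{MO1}. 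If $\{f(x_\nu)\}$ is unbounded, hypothesis (i) enters in a different way than you conjecture: one takes a properly efficient $\hat x$ of \eqref{MO} and observes that $\frac{f(x_\nu)-f(\hat x)}{\|f(x_\nu)-f(\bar x)\|}\to-\bar d$ because the denominator tends to $+\infty$, which contradicts Benson proper efficiency of $\hat x$ for \eqref{MO} directly (no appeal to $\phi$, to Theorem \ref{T1}, or to any localization of $\bar x$). To repair your write-up you would need to add both of these cases; your current argument only covers the subcase $f(x_\nu)\to f(\bar x)$, and even the bounded-but-different-limit subcase requires working with Jacobians along the whole limit segment rather than at $f(\bar x)$.
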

\begin{proof}
 The lemma is proved similar to Lemma \ref{L1}. Let  $\bar x$ be a properly efficient solution of \eqref{MO1}. To the contrary, assume that $\bar x$ is not a properly efficient solution of \eqref{MO}. By Benson's proper efficiency, there exist
$\{x_\nu\}_\nu\subseteq X$, $\{d_\nu\}_\nu\subseteq \mathbb{R}^p_+$ and $\{t_\nu\}_\nu\subseteq \mathbb{R}_+$ such that
$$
\lim_{\nu\to \infty} t_\nu(f(x_\nu)+d_\nu-f(\bar x))=-d,
$$
for some $d\in\mathbb{R}^p_+\setminus\{0\}$. Without loss of generality, one may assume
\begin{align}\label{T22.1}
\lim_{\nu\to \infty} \frac{f(x_\nu)-f(\bar x)}{\|f(x_\nu)-f(\bar x)\|}=-\bar d,
\end{align}
where $0\neq \bar d\in\mathbb{R}^p_+$.  By Theorem 8 in \cite{Hirr}, for each $\nu,$ there are  $\{y^1_\nu, ..., y^q_\nu\}\subseteq Y$ and $\lambda^\nu\in\mathbb{R}^q_+$ such that
\begin{align}\label{T33.2}
\phi(f(x_\nu))-\phi(f(\bar x))=\sum_{k=1}^q \lambda^\nu_kM^k_\nu(f(x_\nu)-f(\bar x)),
\end{align}
where  $y^k_\nu\in [f(x_\nu), f(\bar x)]$ and $M^k_\nu\in\partial \phi(y^k_\nu)$, $k=1, ..., q$, and $\sum_{k=1}^q \lambda_k^\nu=1$.  Suppose that $\{f(x_\nu)\}_\nu$ has a cluster point. Without loss of generality, one may assume that $\lambda^\nu\to \lambda$, $f(x_\nu)\to \bar y \in Y$, $y^k_\nu\to y_k  \in Y$ and $M^k_\nu\to M^k\in\partial \phi(y_\nu)$ for $k=1, ..., q$.  By the assumptions of the theorem, accompanying \eqref{T22.1} and \eqref{T33.2}, we get
\begin{align*}
\lim_{\nu\to\infty}\frac{\phi(f(x_\nu))-\phi(f(\bar x))}{\|f(x_\nu)-f(\bar x)\|}=-\hat d,
\end{align*}
for some $\hat d\in\mathbb{R}^q_+\setminus\{0\}$. The preceding relation contradicts the proper efficiency of  $\bar x$ for \eqref{MO1}. Now we consider the case that $\{f(x_\nu)\}_\nu$ is unbounded. Let $\hat x$ be a properly efficient solution of \eqref{MO}. One can infer from \eqref{T22.1},
\begin{align*}
\lim_{\nu\to \infty} \frac{f(x_\nu)-f(\hat x)}{\|f(x_\nu)-f(\bar x)\|}=-\bar d.
\end{align*}
This contradicts the proper efficiency of $\hat x$ for \eqref{MO}, and the proof is complete.
\end{proof}

In general, Lemma \ref{L2} does not hold when Problem \eqref{MO} does not have any properly efficient solution. The following example clarifies this point.

\begin{example}
Consider the multi-objective problem
$$\begin{array}{ll}
\nonumber & \min \ \left[\begin{matrix} f_1(x)\\  f_2(x)
\end{matrix}\right]\\
& \ s.t. \ x\leq1,
\end{array}$$
with $f_1(x)=x$ and
$$
f_2(x)=\left\{\begin{array}{ll}
      -x, & -1\leq x\leq 1 \\
      1,  &  x\leq -1 \\
   \end{array}\right.
$$
Let $\phi: \mathbb{R}^2\to \mathbb{R}^2$ given by $\phi(y)=\begin{bmatrix} e^{y_1},& e^{y_2}\end{bmatrix}^T$.
The original and the transformed problems have the same efficient solutions. Figure 1 illustrates that  all efficient points of the transformed problem are properly efficient while the original problem does not have any properly efficient solution.
\begin{figure}[H]\label{fig1}
\center
\begin{tikzpicture}[domain=0:4]
  \draw[->] (-3.2,0) -- (2.2,0) node[right] {$f_1$};
  \draw[->] (0,-2) -- (0,2) node[above] {$f_2$};

\draw[blue,domain=-1:1,samples=100] plot (\x, -\x);
\draw[blue,domain=-3.2:-1,samples=100] plot (\x, 1);
\end{tikzpicture}
\qquad
\begin{tikzpicture}[domain=0:4]
  \draw[->] (-1.2,0) -- (3.2,0) node[right] {$\phi_1(f)$};
  \draw[->] (0,-1) -- (0,3.2) node[above] {$\phi_2(f)$};
   \draw [fill=white] (0,2.718281828459045) circle[radius= 0.1 em];
\draw[blue,domain=-1:1,samples=100] plot ({exp(\x)}, {exp(-\x)});
\draw[blue,domain=-3.2:-1,samples=100] plot ({exp(\x)}, 2.718281828459045);
\end{tikzpicture}
\caption{ $f(X)$ and $\phi\big(f(X)\big)$}
\end{figure}
\end{example}

Note that in the same line one can establish Lemma \ref{L2} when $f(X)$ is Lipschitz arc-wise (arc-wise $\mathbb{R}^p_+$-convex) connected. A set $Y\subseteq \mathbb{R}^p$ is called Lipschitz arc-wise (arc-wise $\mathbb{R}^p_+$-convex) connected if for each $y_1, y_2\in Y$, there exists a Lipschitz (convex) function  $\gamma: [0, 1]\to Y$ with $\gamma(0)=y_1$ and $\gamma(1)=y_2$. Since convex functions on compact subsets of Euclidean spaces are Lipschitz,  arc-wise $\mathbb{R}^p_+$-convex connectivity implies Lipschitz arc-wise connectivity \cite{Clar}.

Corollary \ref{111+} below, addresses the result of Theorem \ref{L2} for differentiable case.

\begin{corollary}\label{111+}
Assume that the properly efficient set of \eqref{MO} is non-empty, and $Y$ is open and convex satisfying $\cl(f(X))\subseteq Y$. Furthermore, assume that $\phi: Y\to \mathbb{R}^q$  is continuously differentiable on $Y$. If $\nabla\phi(y)\geqq 0$ and $\{d\in\mathbb{R}^p_+\backslash \{0\}: \nabla\phi(y)d=0\}=\emptyset$ for each $y\in Y$, then the set of properly efficient solutions of \eqref{MO1} is a subset of that of \eqref{MO}.
\end{corollary}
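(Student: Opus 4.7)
The plan is to derive this corollary as an immediate specialization of Lemma \ref{L2}, by verifying that each hypothesis of the lemma reduces, under the stronger $C^1$ assumption, to the hypotheses listed here.

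First I would note that if $\phi$ is continuously differentiable on the open set $Y$, then $\phi$ is locally Lipschitz on $Y$, which gives hypothesis (iii) of Lemma \ref{L2}. Moreover, when $\phi$ is $C^1$, the generalized Jacobian is a singleton: $\partial\phi(y)=\{\nabla\phi(y)\}$ at every $y\in Y$ (as already pointed out in the paper just after the definition of $\partial\phi$). Consequently, the set-valued conditions (iv) and (v) of Lemma \ref{L2} collapse to the pointwise conditions stated in the corollary. Specifically, condition (v), "$M\geqq 0$ for all $M\in\partial\phi(y)$", becomes "$\nabla\phi(y)\geqq 0$"; and condition (iv), "$\ker(M)\cap\mathbb{R}^p_+=\{0\}$ for all $M\in\partial\phi(y)$", becomes exactly "$\{d\in\mathbb{R}^p_+\setminus\{0\}:\nabla\phi(y)d=0\}=\emptyset$".

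The remaining hypotheses (i) and (ii) of Lemma \ref{L2} (nonemptiness of the properly efficient set of \eqref{MO}, and openness and convexity of $Y$ with $\cl(f(X))\subseteq Y$) are assumed verbatim in the statement of the corollary. Since every hypothesis of Lemma \ref{L2} is therefore in force, its conclusion applies and yields that the set of properly efficient solutions of \eqref{MO1} is contained in that of \eqref{MO}.

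There is essentially no obstacle beyond a careful bookkeeping step: one only has to invoke the standard facts that $C^1$ implies locally Lipschitz and that the Clarke generalized Jacobian of a $C^1$ map reduces to the classical Jacobian. Once these are recorded, the matching of conditions (iv), (v) of Lemma \ref{L2} with the two pointwise assumptions of the corollary is a direct translation, and the proof is complete.
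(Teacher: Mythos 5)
Your argument is correct and coincides with what the paper intends: Corollary \ref{111+} is stated as the smooth specialization of Lemma \ref{L2}, and your verification that $C^1$ implies locally Lipschitz, that $\partial\phi(y)=\{\nabla\phi(y)\}$, and that conditions (iv)--(v) then reduce to the two pointwise assumptions is exactly the intended (and omitted) reasoning.
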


In the next theorem, we give other sufficient conditions under which the properly efficient sets of Problems \eqref{MO} and \eqref{MO1} are the same.

\begin{theorem}\label{T5}
Assume the following conditions:
\begin{enumerate}[(i)]
\item
The properly efficient set of \eqref{MO} is non-empty.
\item
$Y_1, Y_2\subseteq \mathbb{R}^p$ are open and convex.
\item
$\phi: Y_1\to Y_2$ and its inverse, $\phi^{-1}$, are locally Lipschitz.
\item
$\cl(f(X))\subseteq Y_1$ and $\cl(\phi(f(X)))\subseteq Y_2$.
\item
$
\ker(M)\cap \mathbb{R}^p_+=\{0\}, \  \forall y\in Y_1, \forall M\in\partial{\phi}(y);
$\\
$M\geqq 0, \  \forall y\in Y, \forall M\in\partial{\phi}(y)$.
\item
$\ker(M)\cap \mathbb{R}^p_+=\{0\}, \  \forall y\in Y_2, \forall M\in\partial{\phi^{-1}}(y)$; \\
$M\geqq 0, \  \forall y\in Y, \forall M\in\partial{\phi^{-1}}(y)$.
\end{enumerate}
Then the properly efficient solutions of \eqref{MO} and \eqref{MO1} are the same.
\end{theorem}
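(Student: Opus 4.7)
The plan is to prove the two inclusions in the claimed equality separately: the inclusion of the properly efficient set of \eqref{MO1} into that of \eqref{MO} follows at once from Lemma~\ref{L2}, while the reverse inclusion requires an adaptation of the proof of Lemma~\ref{L2} in which the transformation $\phi^{-1}$ plays the role of $\phi$.

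For the first inclusion, I would simply observe that conditions (i)--(v) of Theorem~\ref{T5} are exactly the hypotheses of Lemma~\ref{L2} with $Y=Y_1$ and transformation $\phi$, so its conclusion yields the inclusion immediately.

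For the reverse inclusion, I fix $\bar x$ in the properly efficient set of \eqref{MO} (non-empty by (i)), assume for contradiction that $\bar x$ is not properly efficient for \eqref{MO1}, and mimic the proof of Lemma~\ref{L2} with $\phi^{-1}$ in place of $\phi$. Benson's definition applied to \eqref{MO1} yields, after passage to a subsequence, $\{x_\nu\}\subseteq X$ with
\begin{equation*}
\lim_{\nu\to\infty}\frac{\phi(f(x_\nu))-\phi(f(\bar x))}{\|\phi(f(x_\nu))-\phi(f(\bar x))\|}=-\bar d,\quad \bar d\in\mathbb{R}^p_+\setminus\{0\}.
\end{equation*}
Because $Y_2$ is convex and contains $\phi(f(X))$, every segment $[\phi(f(x_\nu)),\phi(f(\bar x))]$ lies in $Y_2$, and Theorem~8 of \cite{Hirr} applied to $\phi^{-1}$ provides $z_\nu^k\in Y_2$, $N_\nu^k\in\partial\phi^{-1}(z_\nu^k)$ and $\lambda^\nu\in\mathbb{R}^q_+$ with $\sum_k\lambda_k^\nu=1$ such that $f(x_\nu)-f(\bar x)=\sum_k\lambda_k^\nu N_\nu^k\bigl(\phi(f(x_\nu))-\phi(f(\bar x))\bigr)$. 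When $\{\phi(f(x_\nu))\}$ has a cluster point $\bar z\in\cl(\phi(f(X)))\subseteq Y_2$, the $z_\nu^k$ stay in a compact subset of the open set $Y_2$; local Lipschitz continuity of $\phi^{-1}$ together with upper semi-continuity of the generalized Jacobian then lets me extract limits $N_\nu^k\to N^k\in\partial\phi^{-1}(z^k)$ and $\lambda^\nu\to\lambda$, and condition (vi) forces $-\sum_k\lambda_kN^k\bar d\in-\mathbb{R}^p_+\setminus\{0\}$, contradicting Benson's proper efficiency of $\bar x$ for \eqref{MO}.

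The step I expect to be the main obstacle is the complementary sub-case in which $\{\phi(f(x_\nu))\}$ is unbounded, for the intermediate points $z_\nu^k$ may then leave every compact subset of $Y_2$ and the limiting argument above breaks. My plan is first to deduce from openness of $Y_1$, the inclusion $\cl(f(X))\subseteq Y_1$, and local Lipschitz continuity of $\phi$ that $\{f(x_\nu)\}$ must also be unbounded (otherwise $\phi$ would map its closure, a compact subset of $Y_1$, to a bounded set), and then to follow the unbounded case in the proof of Lemma~\ref{L2} with the roles of $\phi$ and $\phi^{-1}$ swapped: extract a convergent subsequence of $(f(x_\nu)-f(\bar x))/\|f(x_\nu)-f(\bar x)\|$, use the mean-value representation of $\phi(f(x_\nu))-\phi(f(\bar x))$ via $\partial\phi$ together with condition (v) and the monotonicity it imparts to $\phi$ to identify the limit direction as an element of $-\mathbb{R}^p_+\setminus\{0\}$, and thereby contradict the proper efficiency of $\bar x$ for \eqref{MO}. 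Combining the two inclusions yields the equality of properly efficient sets claimed in Theorem~\ref{T5}.
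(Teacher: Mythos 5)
Your first inclusion is exactly the paper's route: the paper disposes of Theorem \ref{T5} by a bare appeal to the lemma (the citation of Lemma \ref{L1} there is evidently a slip for Lemma \ref{L2}), applied to $\phi$ and to $\phi^{-1}$. Your bounded sub-case of the reverse inclusion is a correct mirror of the lemma's proof: the segments $[\phi(f(\bar x)),\phi(f(x_\nu))]$ stay in a compact convex subset of the open convex set $Y_2$, local Lipschitzness makes $\partial\phi^{-1}$ locally bounded and upper semicontinuous there, and condition (vi) gives both $N^k\geqq 0$ and $N^k\bar d\neq 0$, so the limit direction of $f(x_\nu)-f(\bar x)$ lies in $-\mathbb{R}^p_+\setminus\{0\}$ and contradicts Benson proper efficiency of $\bar x$ for \eqref{MO}. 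You are also right that one cannot simply quote Lemma \ref{L2} with the two problems interchanged: its hypothesis (i) would then demand a properly efficient solution of \eqref{MO1}, which Theorem \ref{T5} does not assume, and that hypothesis is used in the lemma's proof precisely in the unbounded case (the shift of the base point to a properly efficient solution of the target problem). The paper's one-line proof passes over this point; your attempt to confront it directly is aimed at a real subtlety.

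However, your treatment of that unbounded sub-case does not close it. From the representation $\phi(f(x_\nu))-\phi(f(\bar x))=\sum_k\lambda^\nu_k M^k_\nu\big(f(x_\nu)-f(\bar x)\big)$ with $M^k_\nu\geqq 0$, together with the fact that the left-hand side has direction tending to $-\bar d\in-\mathbb{R}^p_+\setminus\{0\}$, you cannot conclude that the limit $u$ of $(f(x_\nu)-f(\bar x))/\|f(x_\nu)-f(\bar x)\|$ lies in $-\mathbb{R}^p_+$: an entrywise nonnegative matrix with $\ker(M)\cap\mathbb{R}^p_+=\{0\}$ can map a vector with a positive component to a nonpositive nonzero vector (take $M=\begin{bmatrix}1&2\\ 1&2\end{bmatrix}$ and $u=(1,-1)^T$, so $Mu=(-1,-1)^T$). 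Thus condition (v) yields only isotonicity of $\phi$, not the reverse implication you need; the reverse implication is exactly what condition (vi) supplies via $\partial\phi^{-1}$, but invoking the mean value theorem for $\phi^{-1}$ places the intermediate points on the segments $[\phi(f(\bar x)),\phi(f(x_\nu))]$, which in this sub-case escape every compact subset of $Y_2$, so no boundedness or convergence of the matrices $N^k_\nu$ is available --- this is precisely the obstacle you identified, and the detour through $\partial\phi$ and "monotonicity" does not avoid it (your observation that $\{f(x_\nu)\}$ must also be unbounded is correct but does not help). As it stands, the reverse inclusion is proved only in the bounded case; to finish you would need either to establish beforehand that \eqref{MO1} possesses a properly efficient solution (so that the lemma's base-point shift, or Lemma \ref{L2} itself with roles swapped, becomes applicable), or an argument giving uniform control of $\partial\phi^{-1}$ (or $\partial\phi$) along unbounded segments; neither is supplied in the proposal.
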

\begin{proof}
It follows from Lemma \ref{L1}.
\end{proof}

A corollary similar to Corollary \ref{111+} can be written for Theorem \ref{T5} as well.

Hirschberger (Theorem 5.2 in \cite{Hir}) showed that both Problems \eqref{MO} and \eqref{MO1} share the same properly efficient solutions provided that the following conditions hold:
\begin{enumerate}[(a)]
\item
$f(X), \phi(f(X))\subseteq \mathbb{R}^p$ are closed and arc-wise $\mathbb{R}^p_+$-convex;
\item
$Y_1$ and $ Y_2$ are open sets with $ f(X)\subseteq Y_1$ and $ \phi(f(X))\subseteq Y_2$;
\item
$\phi: Y_1\to Y_2$ is a diffeomorphism (both $\phi$ and $\phi^{-1}$ are bijective and differentiable);
\item
$\phi: Y_1\to Y_2$ is $\mathbb{R}^p_+$-transformation;
\item
The properly efficient set of \eqref{MO} is non-empty.
\end{enumerate}
Since $\phi$ is $\mathbb{R}^p_+$-transformation, for given $\bar y\in Y$ and $d\in \mathbb{R}^p_+\setminus\{0\}$,
$$
\nabla\phi(\bar y)d=\lim_{t\to 0}\frac{\phi(\bar y+td)-\phi(\bar y)}{t}\geqq 0.
$$
As $d\in \mathbb{R}^p_+\setminus\{0\}$ is an arbitrary point, we must have $\nabla\phi(\bar y)\geqq 0$. In addition, $\phi$ is a diffeomorphism, thus $\nabla\phi(\bar y)$ is invertible and $\ker(\nabla\phi(\bar y))\cap \mathbb{R}^p_+=\{0\}$. Similarly, under these circumstances, we can also derive condition (vi) of Theorem \ref{T5}. Since the efficient set of \eqref{MO} is non-empty, $\mathbb{R}^p_+$-transformation properly implies that  the efficient set of \eqref{MO1} is also non-empty. Consequently, by Proposition 4.1 in  \cite{Hir}, the properly efficient set of \eqref{MO1} will be non-empty. As mentioned earlier, Theorem \ref{T5} holds under arc-wise $\mathbb{R}^p_+$-convex connectivity as well. So, Hirschberger's result follows from Theorem \ref{T5} when $\phi$ and $\phi^{-1}$ are locally Lipschitz on their domain.

In another paper, Zarepisheh and Pardalos  \cite{Za2} investigated some special classes of transformations. They considered the transformed problem

\begin{align}\label{Zar2}
\nonumber & \min \
 \begin{bmatrix}
g_1(f_1(x))\\
 \vdots \\
g_p(f_p(x))
\end{bmatrix}\\
& \ s.t. \ x\in X,
\end{align}
in which $g_i: [\inf_{x\in X}f_i(x), \sup_{x\in X}f_i(x)]\to\mathbb{R}$, $i=1, ..., p$.  This transformation is a special case of transformation $\phi$ investigated in Theorem \ref{T5}. They established  if $y^I\in \mathbb{R}^p$ exists and the following conditions are satisfied for each
$i=1, ..., p$, then the properly efficient solutions of Problems \eqref{MO} and \eqref{Zar2} are the same (Theorem 2 in \cite{Za2}):
\begin{enumerate}[(I)]
\item
$g_i$ is continuous on $[\inf_{x\in X}f_i(x), \sup_{x\in X}f_i(x)]$;
\item
$g_i$ is differentiable and $g_i^{\prime}$ is positive on $I_i:=\big(\inf_{x\in X}f_i(x), \sup_{x\in X}f_i(x)\big)$;
\item
Both $g_i$ and $g_i^{\prime}$ are increasing on $\big(\inf_{x\in X}f_i(x), \sup_{x\in X}f_i(x)\big)$.
\end{enumerate}
This result is correct when the interval considered in (II) and (III) is replaced with closed interval $\bar I_i:=\big[\inf_{x\in X}f_i(x), \sup_{x\in X}f_i(x)\big]$.
Indeed, assumptions (II) and (III) should be considered on a set containing $\bar I_i$. Then Theorem 2 in \cite{Za2} is a consequence of Theorem \ref{T5} of the current paper. The following example shows that the properly efficient solutions of \eqref{MO} and \eqref{Zar2} may not be the same if one assumes (I)-(III) with $I_i$'s instead of $\bar I_i$'s.

\begin{example}
Consider the multi-objective problem
\begin{align}\label{1111}
\nonumber & \min \ \begin{bmatrix} x\\  1-x
\end{bmatrix}\\
& \ s.t. \ 0\leqq x\leqq 1,
\end{align}
As the above problem is linear, all efficient solutions  are properly efficient \cite{Eh}. In addition, $\inf_{0\leqq x\leqq 1}f_1(x)=\inf_{0\leqq x\leqq 1}f_2(x)=0$. Let $\phi: \mathbb{R}_+^2\to \mathbb{R}^2$ be given by $\phi(y)=\begin{bmatrix} {y_1}^2,& {y_2}^4\end{bmatrix}^T$. It is easily seen that the example fulfills all assumptions (I)-(III) listed above. Here, $\bar x=1$ is a properly efficient solution of \eqref{1111}, but not for the transformed problem. This follows form the fact that the transformed problem is convex and
 $
 \bar x \notin \{\argmin \lambda_1x^2+\lambda_2(1-x)^4: 0\leqq x\leqq 1\}
 $
 for each $\lambda\in \Int(\mathbb{R}^2_+)$.
\end{example}

\begin{remark}
Some results of the paper are valid without lower semi-continuity assumption, though we considered this assumption throughout the paper for unification.
\end{remark}

\section{Conclusion}\label{conc}

In this paper, we provided some theorems for analysing a unified scalarization approach as well as a general objective transformation, regarding proper efficiency. In addition to establishing fundamental important results, we showed that several well-known results existing in the literature can be obtained as a by-product of these new theorems. These results not only provide a unified framework for examination of the scalarization techniques, but they pave the road for introducing and analysing new scalarization methods.

\bibliographystyle{tfs}
\bibliography{interactnlmsample}
\end{document}